\newtheorem{theorem}[thm]{Theorem}
\def\rightc{\xrightarrow{(c)}}
\def\rightf{\xrightarrow{(f)}}
\def\rightw{\xrightarrow{(w)}}
\def\rightwc{\xrightarrow{(wc)}}
\def\rightwf{\xrightarrow{(wf)}}
\def\0{\varnothing}
\def\rtt{\rightthreetimes}
\def\lra{\longrightarrow}
\def\ilim{{\raisebox{0pt}{$\bigcirc$}} \kern -0.31cm \hbox{$\Yleft$}}
\DeclareMathAlphabet{\mathpzc}{OT1}{pzc}{m}{it}
\def\Qt{{\mathcal Qt}}
\def\QtN{\textrm{QtNaamen}}
\def\Qtc{\QtN_c}
\def\FC{\mathfrak C}
\def\Bx{B\times B}
\title[The UA in posetal MC]{The Univalence Axiom in posetal model categories}
\author[M. Gavrilovich]{Misha Gavrilovich} 
\author[A. HASSON]{Assaf Hasson$^*$}
\thanks{$^{*}$ Partially supported by Israel Science Foundation grant number 
XYZ}
\address{Department of mathematics\\
Ben Gurion University of the Negev\\
Be'er Sheva,\\
Israel} \email{hassonas@math.bgu.ac.il}
\author[I. Kaplan]{Itay Kaplan}
\address{
WWU M\:unster\\
Mathematisches Institut\\
Einsteinstraße 62\\
48149 Münster\\
Germany}
\email{itay.kaplan@uni-muenster.de}
\begin{document}\large
\maketitle

\begin{abstract}
In this note we interpret Voevodsky's Univalence Axiom in the language of 
(abstract) model categories. We then show that any posetal locally Cartesian 
closed model category $\Qt$ in which the mapping $Hom^{(w)}(Z\times B,C):\Qt\lra 
Sets$ is functorial in $Z$ and represented in $\Qt$ satisfies our homotopy 
version of the Univalence Axiom, albeit in a rather trivial way. This work was 
motivated by a question reported in \cite{Ob}, asking for a model of the 
Univalence Axiom not equivalent to the standard one. 
\end{abstract}

\section{introduction}

Though the notion of a categorical model of dependent type theory was known for 
quite some time now, it is only in recent years that it was realized that the 
extra categorical structure required to model the structure  of equality in 
dependent type theory corresponds to the structure of weak factorization 
equivalence, occurring in Quillen's model categories (\cite[p.2]{Ob}). This 
connection is the basis for V. Voevodsky project known as \emph{univalent 
foundations} whose main objective is to give a foundation of mathematics based 
on dependent type theory, which is intrinsically homotopical, in which types 
are interpreted not as sets, but rather as homotopy types (cf.). A central 
ideal in Voevodsky's univalent foundations is the extension of Martin-L\"of's 
dependent type theory by a ``homotopy theory reflection principle'', known as 
the \emph{Univalence Axiom}. Roughly speaking, the Univalence Axiom is the 
condition that the identity type between two types is naturally weakly 
equivalent to the type of weak equivalences between these types (V. Voevodsky, 
talk at UPENN, May 2011). 

Within the category (or, rather, the model category) of simplicial sets 
$sSets$, Voevodsky constructs a model of Martin-L\"of dependent type theory, 
satisfying also the Univalence Axiom. The models constructed in this way are 
called the standard univalent models (cf. Definition 3.2). During a 
mini-workshop around these developments held in Oberwolfach in 2010 the 
following question was raised: ``Does UA have models in other categories (e.g., 
1-topoi) not equivalent to the standard one?'', \cite[p.27]{Ob}. Though this 
question is probably referring to a univalent universe (for type theory), it 
seems to be meaningful also if taken literally.  It turns out that the 
Univalence Axiom can be given a precise meaning in the framework of Quillen's 
model categories (provided they are locally Cartesian closed). It is then 
meaningful to ask whether such a model category satisfies the Univalence Axiom. 

There are two main parts to this note. In the first of these parts (Section 
\ref{univ}) we give an interpretation of the notion of a univalent fibration in 
a purely category theoretic language. To formulate this notion we introduce, 
for a model category $\FC$, a correspondence $Hom^{(w)}(Z\times B,C): \FC\lra 
Sets$, intended to capture the class of weak equivalences between given fibrant 
objects $B,C\in \Ob\FC$. We then show that, given a fibration $p:C\rightf B$, 
if $Hom^{(w)}_{\Bx}(-\times B\times C,C\times B)$ is a representable functor 
(in the slice category $\FC/\Bx$), the ``obvious'' morphism (in $\FC/\Bx$) from 
the diagonal $B_\delta$ to $Hom_{\Bx}(B\times C,C\times B)$ factors 
``naturally'' (and uniquely in that sense) through the object representing this 
functor, $((C\times B)^{B\times C})_w$. We can then define the fibration $p$ to 
be \emph{univalent} if the morphism $m:B_{\delta}\lra ((C\times B)^{B\times 
C})_w$ is a weak equivalence. This construction (or a closely related one) is 
probably known to experts in the field, but since we could not find any 
reference  suitable for our purposes we give it in textbook detail. 

We then introduce the notion of a locally (w/f)-Caretsian closed model 
category, which is a locally Cartesian closed model category with the 
additional property that $Hom^{(w)}_{\Bx}(-\times B\times C,C\times B)$ is a 
representable functor for any fibrant objects $B,C$ and fibration $p:C\lra B$. 
We observe that in a posetal (w/f)-Cartesian closed model category all 
fibrations are univalent in the above sense. This is, of course, to be expected 
in view of Voevodsky's informal description of a univalent fibration as 
``...one of which every other fibration is a pullback in at most one way (up to 
homotopy)''. Apparently, this should suffice to assure that any posetal 
(w/f)-Cartesian closed model category satisfies the Univalence Axiom. But this 
does not provide, to our taste, a satisfying analogy with Voevodsky's 
construction. Such an analogy should have a natural interpretation of all the 
key features in Voevodsky's construction. To our understanding one such feature 
of univalent models is that they come equipped with a universal (univalent) 
fibration, of which all ``small'' fibrations are a pullback (in a unique way), 
\cite[Theorem 3.5]{Voev}. So our aim is to show, in addition, that such a 
universal fibration exists in our model category (with respect to an 
appropriate notion of smallness). 

The second of the main parts of the paper (Section \ref{qtc}) is dedicated to a 
self-contained construction of a posetal locally (w/f)-Cartesian closed model 
category, $\Qtc$. This construction is a special case of a more general 
construction introduced in \cite{GaHa}. In \cite[p.8]{Ob} Voevodsky writes: 
``Now for any $A, B : U$ , it is possible to construct a term $\theta : pathsU 
(A, B) \lra weq(A, B)$... The Univalence Axiom states that the map $\theta$ 
should itself be a weak equivalence for every $A, B : U$''. The map $\theta$ in 
Voevodsky's quote corresponds (to the best of our understanding) to the 
morphism $m$ appearing in the factorisation of the ``obvious morphism'' 
mentioned above. It is now obvious that this formulation of the Univalence 
Axiom is satisfied (in a rather trivial sense) in $\Qtc$. To fulfill our goals, 
it remains to construct an analogue in $\Qtc$ of Voevodsky's universe of 
``small'' fibration (those fibrations all of whose fibers are of cardinality 
smaller than $\alpha$ for some cardinal $\alpha$). To that end we suggest a 
(possibly over-simplified) notion of smallness for fibrations in a posetal 
model category, and show that with this definition $\Qtc$ admits a universe of 
small fibration (which is automatically univalent). 

Admittedly, the model category $\Qtc$ may be too simple an object to be of real 
interest. In \cite{GaHa} we suggest a construction of a (c)-(f)-(w)-labelled 
category analogous to that of $\Qtc$ resulting in a non-posetal category whose 
slices are equivalent to those of $\Qtc$. This category satisfies axioms 
(M1)-(M5) of Quillen's model categories, but does not have products (and 
co-products). We ask whether this richer category can be embedded in a model 
category, and whether such a model category would satisfy the Univalence Axiom, 
as formulated in this note. 

It should be made clear that none of the authors of this note is familiar with 
type theory and its categorical models. When we realized, moreover, that 
formally accurate literature on Voevodsky's univalent foundations exists only 
in the form of Coq code, we decided to base our homotopy theoretic 
interpretation of the Univalence Axiom on the somewhat less formal presentation 
appearing, e.g., in \cite{Voev}, \cite{Ob} and similar sources whose language 
is closer to the categorical language for which we were aiming. To compensate 
for the lack of precise references, we have taken some pains to give a detailed 
formal account of our interpretation of those sources. M. Warren's comments and 
clarifications, \cite{Warren},  were of great help to us, but all mistakes, are 
- of course - ours. 

A couple of words concerning terminology and notation are in place. In this 
text we refer to Quillen's axiomatization of model categories, as it appears in 
\cite{Qui}. Our usage of ``Axiom (M0)$\dots$(M5)'' refers to Quillen's 
enumeration of his axioms  in that book. Our commutative diagram notation is 
pretty standard, and is explained in detail in \cite{GaHa}. The labeling of 
arrows, (c) for co-fibrations, (f) for fibrations and (w) for weak 
equivalences, is borrowed from N. Durov. 

\section{Cartesian closed posetal categories}\label{CCC}
Given a category $\FC$ and $B,C\in \Ob\FC$, it is often desirable to treat 
$Hom(B,C)$ as an object of the category: this is a natural requirement, as it 
is inconvenient, while working in $\FC$, to be constantly required to work with 
elements 
external to $\FC$, namely, working with  $Hom$-sets merely as sets. A category 
$\FC$ is Cartesian closed if it is closed under ``exponentiation'', namely, 
that given $B,C\in \Ob \FC$, an object $C^B$ (satisfying certain category 
theoretic properties to be explained shortly) exists. As notation suggests, the 
object $C^B$ is supposed to represent the set $Hom(B,C)$. We will now explain 
this in more detail: 

A category $\FC$ (with finite limits, or at least binary products)  is called 
{\em Cartesian closed} if
for every $B,C\in \Ob\FC$  there exist an object, denoted $C^B$, 
and an arrow $\epsilon:C^B \times B \lra C$ such that for every 
object $D$ and arrow $D\times B \xrightarrow{g} C$, there is a unique arrow 
$f:D\lra C^B$
such that $D\times B\xrightarrow {f \times \id_B} C^B \times B \xrightarrow 
{\,\epsilon\,} C$
and $D\times B \xrightarrow {\,g\,} C$  coincide \cite[6.2,p.108]{Awodey}. 

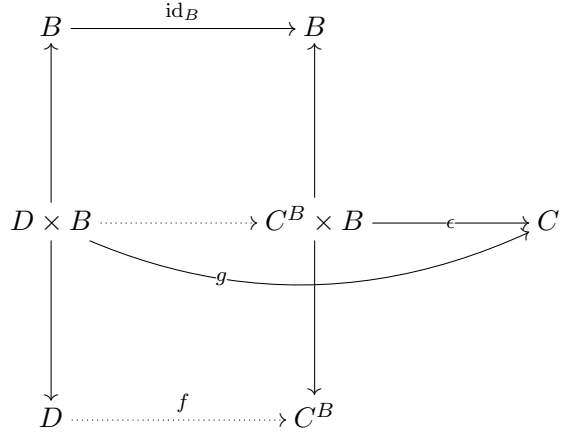
\begin{figure}[H]
\centerline{
\xymatrix @R=5pc @C=5pc{
B  \ar[r]^{\id_B} & B  \\
D\times B \ar@/_2pc/|-(0.3)g[rr] \ar@{.>}[r] \ar[u]\ar[d] & C^B\times B 
\ar[r]|-{\epsilon} \ar[u]\ar[d]& C  \\
D \ar@{.>}[r]^f & C^B 
}}
\caption{The existence of the arrow $D\xrightarrow{f} C^B$ assures the 
existence of the arrow $D\times B\xrightarrow{f\times \id_B} C^B\times B$ by 
the universal property of $C^B\times B$.}\label{CC}
\end{figure}

Given $B,C\in \FC$,  Figure \ref{CC} implies the existence of a bijective 
correspondence, functorial in $D$, between $Hom(D\times B,C)$ and $Hom(D,C^B)$, 
which we can write as: 
\begin{equation*}\tag{$*$}
 Hom(D\times B,C)\equiv Hom(D,C^B)
\end{equation*}

In the above equation we say that $C^B$ is an \emph{object representing} the 
functor $Hom(-\times B,C):\FC\lra Sets$, i.e. that this functor is naturally 
equivalent to the functor $Hom(-,C^B)$. This equation is of importance not only 
in understanding the ideology behind the definition of a Cartesian closed 
category, but will also play an important role in our interpretation of the 
Univalence Axiom in a model category. 

To see why $C^B$ can be, in many cases, identified with the set $Hom(B,C)$ 
consider, in the equation $(*)$ the terminal object, $\top$ (for the variable 
$D$). We get an equivalence of categories: 
\[
Hom(B,C)\equiv  Hom(\top\times B, C) \equiv Hom(\top, C^B)
\]
which, in many cases (e.g., in the category $Sets$, or in the category $Top$ 
where $\top$ is a point) gives: 
\[
 Hom(B,C)=Hom(\top,C^B)=C^B
\]
This explains the general category theoretic convention of identifying 
exponents with $Hom$-sets.

In this note we will be interested, mainly, in posetal model categories. We 
conclude this section with a discussion of a posetal category being Cartesian 
closed. Recall that a category $\FC$ is \emph{posetal} if arrows are unique 
whenever they exist. Namely, given $B,C\in \Ob\FC$ there exists at most one 
$f\in \Mor\FC$ such that $B\xrightarrow{f} C$. Thus, in a posetal category all 
diagrams are commutative, and therefore,  as can be seen in Figure \ref{CC}, if 
$\FC$ is posetal, to verify that $\FC$ is Cartesian closed it is enough to 
verify that for any $B,C\in \Ob \FC$  
there exists an object $C^B$ such that $C^B \times B \lra C$ and such that for 
every
object $D$,  $D\times B \lra C$ implies  $D\lra C^B$.

Given a category $\FC$ and $A\in \Ob\FC$, the \emph{slice of $\FC$ over $A$}, 
denoted $\FC/A$ is the category of arrows $B\lra A$: its objects are arrows 
$B\lra A$ in $\FC$ and an arrow from $B\lra A$ to $C\lra A$ is an arrow in 
$\FC$ making the triangular diagram commute. For a posetal category the slice 
$\FC/A$ can be identified with the full sub-category whose objects are all 
$B\in \Ob\FC$ such that $B\lra A$. 

A category is {\em locally Cartesian closed} if $\FC/A$ is 
Cartesian closed for all $A\in \Ob\FC$, \cite[Prop.9.20,p.206]{Awodey}. Observe 
that a posetal category with a terminal object is Cartesian closed if and only 
if it is locally Cartesian closed. Indeed, $\FC$ has a terminal object $\top$ 
and $\FC/\top$ --- which is, by assumption, Cartesian closed - is merely $\FC$, 
so locally Cartesian closed implies Cartesian closed. In the other direction, 
if $\FC$ is Cartesian closed and $A\in \Ob\FC$ is any object, $B,C\in \Ob\FC/A$ 
then $C^B\times A\lra A$. Thus, $C^B\times A\in \Ob\FC/A$. So it remains to 
verify that for any $D\in \Ob\FC/A$, if there exist an arrow $D\times B\lra C$ 
then there exist an arrow $D\lra C^B\times A$. By definition, there is an arrow 
$D\lra C^B$, and since $D\in \Ob\FC/A$ there is an arrow $D\lra A$. By the 
universal property of $C^B\times A$ this means that there is an arrow $D\lra 
C^B\times A$, as required. 

\section {The Univalence Axiom}\label{univ}
As explained in the introduction, the original formulation of the Univalence 
Axiom is given in the language of type theory (and, apparently, its precise 
formulation exists only in Coq code). The axiom asserts that, given a universe 
of type theory, the homotopy theory of the types in this universe should be 
fully and faithfully reflected by the equality on the universe. To prove that 
the universes of type theory he constructs in the category $sSets$ are 
univalent, Voevodsky proves, \cite[Theorem 3.5]{Voev}, that there is a 
fibration universal for the class of \emph{small} fibrations, and that this 
fibration is \emph{univalent}. Apparently, this statement is the right 
reformulation of the Univalence Axiom in the context of the model category of 
simplicial sets. 

In order to generalize the Univalence Axiom to arbitrary (locally Cartesian 
closed) model categories, one has to explain what it means for a fibration to 
be univalent, and to define a suitable notion of smallness.  Our first step is 
to define (and explain) what is a univalent fibration in an arbitrary locally 
Cartesian closed model category. We then show using this definition, that if 
our locally Cartesian closed model category, $\FC$, is posetal, then every 
fibration is univalent. Thus, to show that such a model category $\FC$ meets 
the Univalence Axiom (for a suitable notion of smallness) it remains to show 
that a universal fibration for all small fibrations exists. This section is 
concluded with the observation that this is indeed the case for a natural 
(though somewhat trivial) notion of smallness, provided $\FC$ is posetal. 

\subsection{A model category object for weak equivalences} 
Recall that Voevodsky's formulation of the Univalence Axiom takes place in the 
category of simplicial sets. In order to reformulate this axiom in the more 
general setting of model categories we have to set up a dictionary between 
Voevodsky's terminology and the common terminology of model categories. 
Apparently, such a translation is folklore to the experts, but since we were 
unable to find a precise formulation meeting the level of generality need for 
this note, we give the details. The main difficulty in this translation is the 
definition of a univalent fibration. Since there is no literature on the 
subject, our translation of this notion relies almost entirely on Voevodsky's 
notes, \cite{Voev}, and some clarifications corresponded to us by Warren, 
\cite{Warren}. 
 
Let us recall Voevodsky's definition of a univalent fibration in the category 
$sSets$ of simplicial sets, \cite[p.7]{Voev}: 

\begin{quote}
For any morphism $q : E \rightarrow B$ consider the simplicial set 
$\underline{Hom}_{B\times B} (E \times B, B \times E)$. If $q$ is a
fibration then it contains, as a union of connected components, a simplicial 
subset 
$weq(E\times B, B\times E)$ which corresponds to morphisms which are weak 
equivalences. 
The obvious morphism from the diagonal $ \delta δ : B \rightarrow B \times B$ 
to 
$\underline{Hom}_{B\times B} (E \times B, B \times E)$ over $B \times B$ 
factors uniquely through a morphism 
$m_q : B \rightarrow weq(E \times B, B \times E)$. 
\end{quote}

In this terminology the fibration $q:E\lra B$ is \emph{univalent} if the 
morphism $m_q:B\lra weq(E\times B,B\times E)$ is a weak equivalence (cf. 
Definition 3.4 [ibid.])

Voevodsky's text translates readily into the language of Cartesian closed model 
categories, with the possible exception of the definition of the object 
$weq(E\times B,B\times E)$. In this subsection we perform this translation, 
focusing on the model categorical definition of $weq(E\times B,B\times E)$. As 
we will see, the object $weq(C,B)$ has much in common with the exponential 
$C^B$, it is therefore convenient to introduce: 
\begin{notation}
 Given a model category $\FC$ and $B,C\in \Ob\FC$, the object $weq(C,B)$ will 
be denoted $C^B_w$. 
\end{notation}

For the sake of clarity, we explain the above text word for word. So let $\FC$ 
be a locally Cartesian closed model category, $E,B\in \Ob\FC$ and $q:E\lra B$ a 
fibration. Let $E\times B$ be the product of $E$ and $B$ in $\FC$. This objects 
comes with two morphisms: $E\times B\xrightarrow{\pr_E^{E\times B}} E$ and 
$E\times B\xrightarrow{\pr_{B}^{E\times B}} B$. 
Since a morphism into a product is uniquely determined by a pair of morphisms 
into its components the following defines 
a unique morphism: 
$(q,id):E\times B \xrightarrow{q\circ \pr_E^{E\times B}\,\times\, 
{\pr_{B}^{E\times B}}} B\times B$. 
In set-theoretic notation, the morphism $q\circ \pr_E^{E\times B}\,\times\, 
{\pr_{B}^{E\times B}}$ defined above is given by the mapping $(e,b)\mapsto 
(q(e),b)$.

As an object of $\FC/B\times B$ this morphism is denoted by Voevodsky  $E\times 
B$. 
In order to define the object $B\times E\in \Ob\FC/B\times B$ observe that (in 
$\FC$) the object $B\times B$ comes equipped with two morphisms $\pr_1,\pr_2$ 
into each of its components. Thus, there is a morphism (in $\FC$) 
$\tau_{B\times B}:B\times B \xrightarrow {\pr_2\times
\pr_1} B\times B$ (which can be thought of us the morphism permuting the 
factors of the product).  In set-theoretic notation $\tau_{B\times 
B}(b_1,b_2)=(b_2,b_1)$. 
The morphism $\tau\circ (q,id)$ as an object of $\FC/B\times B$ is denoted by 
Voevodsky $B\times E$. 

Thus, we have interpreted $B\times E$ and $E\times B$ as objects in the slice 
category $\FC/B\times B$. In view of our discussion of exponentials in Section 
\ref{CCC}, this allows us to identify $\underline{Hom}_{B\times B}(E\times 
B,B\times E)$ with the object $((B\times E)^{E\times B})_{B\times B}$ (recall 
that our assumption that $\FC$ is locally Cartesian closed assures that such an 
object exists). 

Recall that, in Voevodsky words, ``[the object $weq(E\times B, B\times E)$]
corresponds to morphisms [i.e., elements of $Hom_{B\times B}(E\times B, B\times 
E)$] 
which are weak equivalences [in the slice category $\Qt/B\times B$]''. Let us 
now try to understand, in more generality, given a Cartesian closed model 
category $\FC$ and objects $B,C\in \Ob\FC$ what should be the object $C^B_w$. 
In the terminology used in Section \ref{CCC} Voevodsky's text  should mean that 
the object $C^B_w$ \emph{represents} the set of morphisms from $B$ to $C$ 
satisfying the additional requirement that these morphisms are weak 
equivalences. In a Cartesian closed (model) category we identified $Hom(B,C)$ 
with the functor $Hom(-\times B,C)$. Since, in Voevodsky's text $C^B_w$ is a 
sub-object of the exponential $C^B$, it is natural to try and identify $C^B_w$ 
with a sub-functor, let us denote it $Hom^{(w)}(-\times B,C)$, of $Hom(-\times 
B,C)$. Moreover, any $Z\in \Ob\FC$ and morphism $f:B\lra C$ induces a morphism 
$Z\times B\lra C$ given by $f\circ \pr_B^{Z\times B}$. It is, therefore, 
reasonable to require that the same be true for the sub-functor 
$Hom^{(w)}(-\times B,C)$. The ``obvious'' choice of letting $Hom^{(w)}(Z\times 
B,C)$ be the set of all morphisms $h:Z\times B\rightw C$ does not have this 
property. So the next best choice seems to be: 

\begin{notation}
 Given $Z,B,C\in \Ob\FC$, let 
\[
 Hom^{(w)}(Z\times B,C):=\{h:Z\times B\lra C\:|\: (\pr_Z^{Z\times B}\times 
h):Z\times B\rightw Z\times C\}.
\]
\end{notation}
Observe that $Hom^{(w)}(Z\times B,C)\subseteq Hom(Z\times B,C)$ for all $Z$. We 
do not know, however, whether --- in general --- it is functorial in $Z$. We 
leave it as an exercise to the reader to show that if $\FC$ is right proper 
(i.e., if the base change of a weak equivalence along a fibration is again a 
weak equivalence), then $Hom(-\times B,C)$ is indeed functorial provided that 
$B\rightf \top$ and $C\rightf \top$. We remind that $sSets$ is right proper, 
and so is $Top$ --- and, more generally, any model category all of whose 
objects are fibrant (see below) is right proper. 

At all events, if $Hom^{(w)}(-\times B,C)$ is a functor, and as such it is 
represented in $\FC$ we let $C^B_w$ denote the representing object. In 
particular we obtain: 
\begin{equation*}\tag{$**$}
 Hom^{(w)}(Z\times B,C)\equiv Hom(Z,C^B_w)
\end{equation*} 

\begin{definition}
 Let $\FC$ a model category. Say that $\FC$ is (w/f)-Cartesian closed, if it is 
Cartesian closed and, in addition,  $Hom^{(w)}(-\times B,C):\FC\lra Sets$ is 
represented (in the sense of $(**)$ above) for all fibrant $B,C\in \Ob \FC$ 
(i.e., the morphisms $B\rightf \top$ and  $C\rightf \top$ into the terminal 
object, $\top$ are fibrations). Say that $\FC$ is locally (w/f)-Cartesian 
closed, if for any $X\in \Ob\FC$ the slice category $\FC/X$ is (w/f)-Cartesian 
closed. 
\end{definition}

\begin{rem}
 The above definition is our straightforward interpretation of Voevosky's words 
in the langauges of Cartesian closed model categories. This definition is 
sufficeint for our purposes, as it is met by the model category $\Qtc$, 
constructed in the last section of this note. It is conceivable that, in the 
general setting, a more accurate definition will be required. 
\end{rem}

We will now show that if $\FC$ is a (locally) Cartesian closed model category 
such that  $Hom^{(w)}_{B\times B}(-\times (E\times B), B\times E)$ is 
represented in $\FC/B\times B$ then the object representing this functor 
satisfies the requirement in Voevodsky's text, namely, the ``obvious'' morphism 
from the diagonal to $\underline{Hom}_{B\times B}(E\times B,B\times E)$ factors 
uniquely through this representing object. 

Before we proceed, some explanations are needed. Recall that we are working in 
the slice category $\FC/\Bx$. Thus, the diagonal $\delta: B\to \Bx$ is an 
object of $\FC/\Bx$, which we denote $B_{\delta}$. To avoid confusion, we 
denote $E\underline\times B:=E\times B$ and $B\underline\times E:=B\times E$ 
(viewed as objects of $\FC/\Bx$, as explained above). We shall also let, given 
an object $X\in \Ob\FC/\Bx$, $X_s\in \Ob\FC$ denote the source object of the 
morphism (in $\FC$) corresponding to $X$. We have already explained in Section 
\ref{CCC} in what sense  $\underline{Hom}_{\Bx}(E\underline\times 
B,B\underline\times E)$ can be viewed as an object of $\FC/\Bx$. So in order to 
make Voevodsky's statement clear we only have to explain what is the ``obvious 
morphism'' from the diagonal to  $\underline{Hom}_{\Bx}(E\underline\times 
B,B\underline\times E)$.

Consider the product $B_\delta  \times_{B\times B} E\underline\times B$, 
by definition it is the pullback of the morphisms 
$B \xrightarrow \delta B\times B$ and $E\times B \xrightarrow{(q,id)} B\times 
B$. 
\begin{figure}[H]
\centerline{
\xymatrix @R=2pc @C=2pc{
(B_\delta \times_{\Bx} E\underline\times B)_s \ar[r]_(0.6){\pr_2} 
\ar[d]^{\pr_1} & E\times B \ar[d]^{(q,\id_B)} \\
B\ar[r]^{\delta} \ar@/_2pc/[rr]|-\delta & B\times B\ar[r]^{\tau} & \Bx}}
\caption{}\label{2mor}
\end{figure}

It follows immediately from the fact that $\delta$ is the diagonal morphism and 
from the definition of $\tau$ that $\tau\circ \delta\circ \pr_1=\delta\circ 
\pr_1$. The right hand side morphism in the above equality corresponds in 
$\FC/\Bx$ to the object $B_\delta\times (E\underline\times B)$, while the 
composition $\tau\circ (q,\id_B)$ corresponds, by definition, to the object 
$B\underline\times E$. The commutativity of the diagram of Figure \ref{2mor} 
implied by the above equality means, by definition of $\FC/\Bx$ that the 
morphism $\pr_2$ corresponds in $\FC/\Bx$ to a morphism  $h:B_\delta\times 
(E\underline\times B)\lra B\underline\times E$ in $\FC/\Bx$. By  $(*)$ the 
morphism $h$ corresponds to a morphism $\bar m_q:B_\delta \lra 
\underline{Hom}_{B\times B} (E\underline\times B, B\underline\times E)$.
The morphism $\bar m_q$ is {\em the obvious morphism from the diagonal $\delta 
δ : B \rightarrow B \times B$ to
$\underline{Hom}_{B\times B} (E\underline\times B, B\underline\times E)$ over 
$B \times B$}.

Let us denote $\pi_1$ and $\pi_2$ the morphisms in $\FC/\Bx$ corresponding to 
the morphisms $\pr_1$ and $\pr_2$ respectively (see Figure \ref{2mor}). Note 
that these morphisms can be identified with the two canonical morphisms 
associated to $B_{\delta}\times (E\underline\times B)$ as the pullback of 
$B_\delta$ and $E\underline\times B$. Thus, we have a morphism (in $\FC/\Bx$) 
$\pi_1\times(\tau\circ\pi_2):B_\delta\times (E\underline\times B) \lra 
B_\delta\times (B\underline\times E)$. This morphism, by definition of 
$\FC/\Bx$, arises from a morphism $\sigma: ((B_\delta\times (E\underline \times 
B))_s\lra ((B_\delta \times (B\underline \times E))_s$ in $\FC$. This morphism 
in $\FC$ is readily seen to be an isomorphism, and therefore a weak 
equivalence. By the definition of the model structure on $\FC/\Bx$, a morphism 
$X\lra Y$ in $\FC/\Bx$ is labelled (w), (f) or (c) if and only if the 
corresponding morphism $X_s\lra Y_s$ in $\FC$ is labelled (w), (f) or (c) 
respectively. Thus $\sigma$ (or, rather, $(\pi_1,\tau\circ \pi_2)$) is a weak 
equivalence also in $\FC/\Bx$. By definition this means precisely that $h\in 
Hom^{(w)}(B_\delta \times (E\underline\times B),B\underline\times E)$. 
Therefore, $(**)$ implies that $h$ corresponds to a unique morphism 
$m_q:B_\delta \lra ((E\underline\times B)^{(B\underline\times E)})_w$. 

Finally, observe that in any (w/f)-Cartesian closed model category, $\FC$, and 
for all fibrant $B,C\in \Ob\FC$ setting $D=C^B_w$ in $(**)$, we get that the 
identity $\id: C^B_w\lra C^B_w$  is an element of $Hom(C^B_w,C^B_w)$ and 
therefore also of $Hom^{(w)}(C^B_w\times B,C)$. Because $Hom^{(w)}(D\times 
B,C)\subseteq Hom(D\times B,C)$ for all $D$, we get that $\id: C^B_w\lra C^B_w$ 
is an element of $Hom(C^B_w\times B,C)\equiv Hom(C^B_w,C^B)$. Thus, $\id: 
C^B_w\lra C^B_w$ induces, through this last equivalence of functors, a morphism
$\id_*^{(B,C)} : C^B_w\lra C^B$
and a natural transformation $ \id_*^{(B,C)}:Hom^{(w)} (- \times B, C)\equiv
Hom(-,C^B_w)$ coinciding
with the natural transformation provided by the inclusion $Hom^{(w)}(- \times 
B, C)\subseteq Hom(- \times B, C)$.    
Combining this with the conclusion of the previous paragraph, we get that $\bar 
m_q=\id_*^{(B,C)}\circ m_q$. 
The identification of $\id: C^B_w\lra C^B_w$ as an element of $Hom(-\times 
B,C)$ is natural in that sense. Requiring that $\bar m_q$ factors 
\emph{naturally} through $C^B_w$ amounts, therefore, to the requirement that 
this factorization is obtained via $\id_*^{(B,C)}$. We observe that with this 
additional requirement this factorization is unique. 

\subsection{The Univalence Axiom in posetal model categories}

Having defined the object $C^B_w$ for a locally (w/f)-Cartesian closed model 
category $\FC$, we can define a fibration $p:E\lra B$ to be \emph{univalent} if 
the morphism $\bar m_q:B_\delta\lra ((E\underline\times B)^{(B\underline\times 
E)})_w$ is a weak equivalence. In this subsection we prove: 

\begin{lem}
 Let $\FC$ be a locally (w/f)-Cartesian closed posetal model category. Then 
every fibration is univalent. 
\end{lem}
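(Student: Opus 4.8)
The plan is to exploit posetality to show that the morphism whose weak-equivalence we must verify is in fact an isomorphism. The crucial observation is that in a posetal category the diagonal $\delta\colon B\lra\Bx$ is an isomorphism. Indeed, the product $\Bx$ comes with a projection $\pr_1\colon\Bx\lra B$ and with the diagonal $\delta\colon B\lra\Bx$ (the arrow induced by the pair $(\id_B,\id_B)$). Since $\FC$ is posetal the composite $\pr_1\circ\delta\colon B\lra B$ is forced to be $\id_B$, and likewise $\delta\circ\pr_1\colon\Bx\lra\Bx$ is forced to be $\id_\Bx$; hence $\delta$ and $\pr_1$ are mutually inverse and $B\cong\Bx$. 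Consequently the object $B_\delta=(\delta\colon B\lra\Bx)$ is isomorphic, in the slice $\FC/\Bx$, to the structure map $\id_\Bx$, so $B_\delta$ is a terminal object of $\FC/\Bx$.

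Next I would record that $\FC/\Bx$ is again posetal, since a morphism in the slice is a morphism of $\FC$ making a triangle commute and there is at most one such morphism in $\FC$. Now let $q\colon E\lra B$ be a fibration and consider the morphism $m_q\colon B_\delta\lra W$ constructed in the previous subsection, where $W:=((E\underline\times B)^{(B\underline\times E)})_w$; the same reasoning applies to $\bar m_q$ with $W$ replaced by the full exponential, so the choice of target is immaterial. Because $B_\delta$ is terminal in $\FC/\Bx$ there is a unique morphism $W\lra B_\delta$, and by posetality the two composites $W\lra B_\delta\lra W$ and $B_\delta\lra W\lra B_\delta$ are the respective identities. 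Hence $m_q$ is an isomorphism, and every isomorphism is a weak equivalence by the model category axioms. Therefore $m_q$ (equivalently $\bar m_q$) is a weak equivalence and $q$ is univalent.

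Before running this argument one must confirm that the objects and the morphism $m_q$ genuinely exist. The representing object $W$ and the factorisation $m_q$ were produced in the previous subsection under the hypothesis that $Hom^{(w)}_{\Bx}(-\times (E\underline\times B),B\underline\times E)$ is representable in $\FC/\Bx$, and this is precisely what local (w/f)-Cartesian closedness supplies, provided $E\underline\times B$ and $B\underline\times E$ are fibrant objects of $\FC/\Bx$. This in turn holds because $(q,\id)\colon E\times B\lra\Bx$ is the base change of the fibration $q$ along $\pr_1$, hence a fibration, while $\tau\circ(q,\id)$ is the composite of a fibration with the isomorphism $\tau$, hence also a fibration.

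The point worth flagging is that there is essentially no analytic content here: once $\delta$ is seen to be an isomorphism, posetality collapses the entire $Hom^{(w)}$ machinery, and the only things that could go wrong are the existence of $W$ and $m_q$ (handled above) or the failure of $B_\delta$ to be terminal (handled in the first paragraph). This is exactly the sense in which the axiom is satisfied \emph{trivially}: posetality enforces Voevodsky's ``at most one way (up to homotopy)'' on the nose, because arrows are literally unique.
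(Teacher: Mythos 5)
Your proof is correct, and it reaches the conclusion by a leaner route than the paper's, although both hinge on the same posetal collapse. The two arguments open identically: posetality forces the diagonal $\delta\colon B\lra \Bx$ and the projection $\Bx\lra B$ to be mutually inverse, so $B\cong \Bx$. From there the paper works on the \emph{target} of the morphism: it identifies $E\underline\times B$ with $B\underline\times E$ (in a posetal category they are the same object of $\FC/\Bx$), reduces $\underline{Hom}_{\Bx}(E\underline\times B,B\underline\times E)$ to the exponential $(E\times B)^{E\times B}$, proves the auxiliary lemma that $C^C\cong\top$ in any posetal Cartesian closed category, and concludes that the Hom-object is the terminal object of the slice, so the obvious morphism is $B\lra B$; the posetal fact that any factorization $X\lra Z\lra Y$ of an isomorphism consists of two isomorphisms then transfers this to the factored morphism into the weak-equivalence object. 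You instead work on the \emph{source}: since $\delta$ is an isomorphism, $B_\delta$ is terminal in the (posetal) slice $\FC/\Bx$, and any morphism out of a terminal object in a posetal category is an isomorphism, because terminality supplies an arrow back and posetality forces both composites to be identities. This spares you both the identification of $E\underline\times B$ with $B\underline\times E$ and the computation $C^C\cong\top$; the exponential never needs to be computed at all. A further point in your favor: you verify that $E\underline\times B$ and $B\underline\times E$ are fibrant objects of $\FC/\Bx$ (base change of the fibration $q$ along a projection, respectively composition with an isomorphism), which is what the definition of locally (w/f)-Cartesian closed actually requires before the representing object $W$ and the factorization $m_q$ can be invoked; the paper's proof glosses over this existence check, so your version is, if anything, more complete.
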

\begin{proof}
 First, observe that since $\FC$ is posetal for any object $B\in \Ob\FC$ the 
product $B\times B$ is isomorphic to $B$. Indeed, by the universal property of 
$\Bx$ there is a morphism $B\lra B\times B$, and since $\Bx\lra B$ we get that 
$B\cong B\times B$ (because $\FC$ is posetal). 

Recall that since $\FC$ is posetal, for any object $B$ the slice category 
$\FC/B$ can be identified with the full subcategory whose objects are $\{A\in 
\Ob\FC: A\lra B\}$. Namely, the morphism $A\lra B$, as an object in $\FC/B$ can 
be identified with the object $A$ in $\FC$. In particular $B_\delta$ can be 
identified with the object $B$, and given a fibration $E\rightf B$ the objects 
$E\underline\times B$ and $B\underline\times E$ in $\FC/\Bx\equiv \FC/B$ are 
isomorphic and both can be identified with the object $E\times B$ of $\FC$ 
(indeed, in a posetal model category $E\underline\times B$ and 
$B\underline\times E$ are the same object since $(E\underline\times 
B)_s=(B\underline\times E)_s$). So $\underline{Hom}_{\Bx}(E\underline\times 
B,B\underline\times E)$ is isomorphic to the object $(E\times B)^{E\times B}$. 

It will suffice to show that for any object $C\in \Ob\FC$ the exponent $C^C$ is 
isomorphic to $\top$, the terminal object of $\FC$. Indeed, then 
$\underline{Hom}_{\Bx}(B\underline\times E, E\underline\times B)=((E\times 
B)^{(E\times B))}_{\Bx}=\top_{\Bx}$. But the terminal object of $\FC/\Bx$ is 
$\Bx=B$. We get that the ``obvious morphism'' $h:B_\delta \lra 
\underline{Hom}_{\Bx}(E\underline\times B,B\underline\times E)$ defined in the 
previous subsection corresponds to the arrow $B\lra B$, so it is an 
isomorphism, and therefore a weak equivalence. But in a posetal model category, 
if $X\lra Y$ is an isomorphism then for all $Z$, if $X\lra Z\lra Y$ then $X\lra 
Z$ and $Z\lra Y$ are both isomorphisms. In particular, the morphism $\bar 
m_q:B_\delta \lra ((E\underline\times B)^{(B\underline\times E)})_w$ is an 
isomorphism, and therefore a weak fibration. 

It remains, therefore, to show that in a posetal Cartesian closed model 
category $C^C\cong \top$ for all $C\in \FC$. Indeed, $\top\times C\cong C$, 
implying $\top\times C\lra C$. So by Figure \ref{CCC} (with $D=\top$ and $B=C$) 
we get an arrow $\top\lra C$, and $\FC$ being posetal we get $C^C\cong \top$. 
\end{proof}

Having seen that in posetal locally Cartesian closed model categories the 
notion of univalent fibrations degenerates, it remains to show that there 
exists a fibration $p$ universal for the class of \emph{small} fibrations. Of 
course, the notion of smallness in this context should be defined as well. 

\begin{definition}
Let $\FC$ be a model category, Fix a morphism $\tilde U \xrightarrow{p} U$. A 
morphism  $Y \xrightarrow{f} X$ is $p$-small if $Y\xrightarrow{f} X$ fits in a 
pull-back square: 

\begin{figure}[H]
\centerline{
\xymatrix @R=2pc @C=2pc{
Y \ar@{.>}[r] \ar[d]|-f & \tilde U \ar[d]|-p \\
X\ar[r]|-{f_p} & U
}}
\caption{This is a pullback square, if for any morphisms $Z\lra X$ and $Z\lra 
\tilde U$ making the diagram commute there is an arrow $Z\lra Y$ making the 
diagram commute. }
\end{figure}

 Say that $p$ is \emph{universal} (with respect to a pre-defined class of 
\emph{small} fibrations) if the class of $p$-small fibrations contains all 
small fibrations.  
\end{definition}

Observe that in a posetal category, given morphisms $p$ and $f$ as in the above 
definition, the morphism $X\xrightarrow{f_p} U$ is unique if it exists. 
Therefore, $Y\xrightarrow{f} X$ is $p$-small if and only if $X\lra U$ and 
$Y=\tilde U\times X$.

\begin{lem}\label{psmall}
Let $\Qt$ be a posetal model category. Consider the unique morphism $\0\lra 
\top$ and let $\tilde U$ be the unique object such that $\0\rightwc \tilde U 
\rightf \top$. Let $p$ denote the fibration $\tilde U \rightf \top$.  Assume, 
in addition, that all morphisms in $\Qt$ are co-fibrations. Then a fibration 
$f:Y\lra X$ is $p$-small iff $\emptyset \rightwc Y$.
\end{lem}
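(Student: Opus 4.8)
The plan is to prove both implications by reducing, via the remark preceding the statement, to the single assertion that for a fibration $f:Y\lra X$ one has $\emptyset\rightwc Y$ if and only if $Y\cong\tilde U\times X$, where $\tilde U\times X$ denotes the pullback of $p$ along the unique morphism $X\lra\top$ (since the base change is over the terminal object, it is just the product). Throughout I would exploit two consequences of the hypothesis that \emph{every} morphism is a cofibration: first, a weak equivalence is then automatically a trivial cofibration, and so has the left lifting property against every fibration; second, a morphism that is simultaneously a fibration and a weak equivalence is an isomorphism (lifting such a trivial fibration against itself produces a section, and in a posetal category a morphism admitting a section is an isomorphism).

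The crux, which I expect to be the \textbf{main obstacle}, is to show that $\emptyset\lra\tilde U\times X$ is a weak equivalence for every $X$; equivalently, that the projection $p_1:\tilde U\times X\lra\tilde U$ is a weak equivalence. The naive attempt to invert $p_1$ by producing a morphism $\tilde U\lra\tilde U\times X$ fails, since such a morphism would require a morphism $\tilde U\lra X$, which need not exist. Instead I would factor $p_1$ as $\tilde U\times X\rightwc Q\rightf\tilde U$ using the factorization axiom. Since $\emptyset\rightwc\tilde U$ is a trivial cofibration and $Q\rightf\tilde U$ is a fibration, lifting the square with top edge $\emptyset\lra Q$ and bottom edge $\id:\tilde U\lra\tilde U$ yields a morphism $\tilde U\lra Q$; together with the fibration $Q\lra\tilde U$ and posetality this forces $Q\cong\tilde U$. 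Hence $Q\lra\tilde U$ is an isomorphism, in particular a weak equivalence, so by two-out-of-three $p_1$ is a weak equivalence, and then, composing with the weak equivalence $\emptyset\rightwc\tilde U$ along $\emptyset\lra\tilde U\times X\lra\tilde U$, a second application of two-out-of-three shows that $\emptyset\lra\tilde U\times X$ is a weak equivalence.

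Granting this, both implications follow quickly. For the forward direction, if $f$ is $p$-small then by the remark $Y\cong\tilde U\times X$, so the crux shows $\emptyset\lra Y$ is a weak equivalence, whence $\emptyset\rightwc Y$ (the cofibration label being automatic). For the converse, assume $\emptyset\rightwc Y$. Lifting the trivial cofibration $\emptyset\rightwc Y$ against the fibration $p:\tilde U\rightf\top$ produces a morphism $Y\lra\tilde U$, which together with $f:Y\lra X$ induces $Y\lra\tilde U\times X$ by the universal property of the pullback. In the other direction, the crux shows $\emptyset\lra\tilde U\times X$ is a trivial cofibration; lifting it against the fibration $f:Y\lra X$, in the square whose bottom edge is the projection $\tilde U\times X\lra X$ and whose top edge is $\emptyset\lra Y$, produces a morphism $\tilde U\times X\lra Y$. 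In a posetal category the two morphisms $Y\lra\tilde U\times X$ and $\tilde U\times X\lra Y$ force $Y\cong\tilde U\times X$, so that $f$ is $p$-small by the remark.

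Two points deserve care in the write-up. First, the reduction to $Y\cong\tilde U\times X$ relies on the observation (just before the statement) that $p$-smallness over the terminal object amounts exactly to $Y$ being the pullback, so the entire content is packaged into producing inverse morphisms in the poset; the posetal collapse of the pullback universal property is what makes the lifting arguments suffice. Second, the factorization step in the crux is genuinely necessary rather than cosmetic: the projection $p_1$ is the base change of $X\lra\top$, which is not a fibration in general, so one cannot lift directly against $p_1$, whereas replacing it by the fibration $Q\rightf\tilde U$ is precisely what lets the trivial cofibration $\emptyset\rightwc\tilde U$ do its work. I do not expect to need right properness; the hypothesis that all morphisms are cofibrations, together with the factorization and lifting axioms, supplies enough trivial cofibrations to carry the argument.
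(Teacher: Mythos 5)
Your proof is correct, and it runs on the same engine as the paper's --- factor, lift, and let posetality collapse two-way arrows into isomorphisms --- but it is organized around a different key lemma, and the resulting argument is leaner. The paper's central claim is: if $Z\lra \tilde U$ then $Z\rightwc \tilde U$, proved exactly as your crux (factor $Z\rightwc Z_{wc}\rightf \tilde U$, lift $\0\rightwc \tilde U$ against the fibration to get $\tilde U\lra Z_{wc}$, conclude $Z_{wc}\cong \tilde U$ by posetality); your crux is the special case $Z=\tilde U\times X$, repackaged via two-out-of-three as the statement that $\0\lra \tilde U\times X$ is always a trivial cofibration. The difference shows in how the two implications are then derived. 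For the direction ``$p$-small implies $\0\rightwc Y$'' the paper applies its claim to $Y$, introduces a second factorization $\0\rightwc Y_{wc}\rightf Y$, applies the claim again to $Y_{wc}$, invokes (M5) on the resulting triangle, and finishes with the posetal collapse of a morphism labelled (wcf); you get the same conclusion immediately from $Y\cong \tilde U\times X$ and the crux. For the converse, both proofs obtain $Y\lra \tilde U\times X$ by the same lifting of $\0\rightwc Y$ against $p$, but to produce the reverse arrow the paper first proves $Y\rightwc X\times \tilde U$ (again via an auxiliary factorization, two uses of its claim, (M5), and the (wcf)-collapse) and only then lifts that against $f$, whereas you lift the trivial cofibration $\0\rightwc \tilde U\times X$ against $f:Y\rightf X$ directly --- a genuinely different lifting square that skips the intermediate weak equivalence entirely. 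What the paper's formulation buys is reusability: its more general claim is invoked again after the lemma (e.g.\ to show that products of trivially cofibrant objects are trivially cofibrant in the discussion of smallness). What yours buys is economy: the extra factorizations, the (M5) triangles, and the (wcf)-isomorphism argument all disappear from the body of the proof.
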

\begin{proof}
The key to the proof is the following observation: 

\noindent{\bf Claim} If $Z\lra \tilde U$ then $Z\rightwc \tilde U$.\\

\proof Let $Z\rightwc Z_{wc}\rightf \tilde U$  It will suffice to prove that 
$\tilde U\lra Z$, since then $Z_{(wc)}$ is isomorphic to $\tilde U$ (member 
that $\Qt$ is posetal).  Indeed, consider the following diagram: 
\begin{figure}[H]
\centerline{
\xymatrix @R=2pc @C=2pc{
\perp \ar[d]|-{(wc)} \ar[r] & Z_{wc} \ar[d]|-{(f)}\\
\tilde U \ar[r] \ar@{.>}[ur] & \tilde U
}}
\end{figure}
Finishing  the proof of the claim \qed$_{\text {Claim}}$

Now, if $\perp\rightwc Y\rightf X$ (where $\perp$ is the initial object), then 
$\perp\lra Y\rtt \tilde U \lra \top$, giving $Y\lra \tilde U$. So it suffices 
to show that $Y=\tilde U\times X$. Because $\perp \rightwc Y \rtt \tilde U 
\rightf \top$ we know that $Y\lra \tilde U$. So $Y\lra X\times \tilde U$. Let 
$Y\rightwc Y_{wc} \rightf X\times \tilde U$. By the above claim $X\times \tilde 
U \rightwc \tilde U$ and $Y_{wc}\rightwc \tilde U$. So by (M5): 
\begin{figure}[H]
\centerline{
\xymatrix @R=2pc @C=2pc{
& Y_{wc} \ar[dl]|-{\therefore (w)} \ar[dr]|-{(wc)}\\
X\times \tilde U \ar[rr]|-{(wc)} & & \tilde U 
}}
\caption{By (M5) the arrow $Y_{wc}\lra X\times \tilde U$ is a weak equivalence.}
\end{figure}
But, by assumption all arrows in $\Qt$ are co-fibrations, and we chose $Y_{wc}$ 
so that $Y_{wc}\rightf X\times \tilde U$. so $Y_{wc}\xrightarrow{(wcf)}X\times 
\tilde U$, and since $\Qt$ is posetal, this implies that $Y_{wc}$ is isomorphic 
to $X\times \tilde U$. We conclude that $Y\rightwc  X\times \tilde U$. 
Therefore $Y\lra X\times \tilde U \rtt Y\lra X$, giving an arrow $X\times 
\tilde U \lra Y$, with the conclusion that $Y$ is isomorphic to the product, as 
required. 

In the other direction. If $Y\rightf X$ is $p$-small then $Y\lra \tilde U$, and 
by the claim $Y\rightwc \tilde U$. Similarly, if $\perp \rightwc Y_{wc}\rightf 
Y$ then $Y_{wc}\rightwc \tilde U$. So (M5), applied to the triangle $\tilde 
U\longleftarrow Y_{wc}\lra Y\lra \tilde U$, assures that $Y_{wc}\rightw Y$. 
Since, by assumption, all arrows are co-fibrations, we get 
$Y_{wc}\xrightarrow{(wcf)} Y$, with the conclusion that $\perp\rightwc Y$, as 
required. 
\end{proof}

In order to conclude we have to give a reasonable notion of smallness --- 
namely, to define when is a fibration $f:X\to Y$ in an arbitrary model category 
\emph{small}. For reasons to be explained below we do not attempt to give a 
definition of a small fibration in that generality. Rather, our goal is find 
some (minimal) necessary conditions that such a class of fibrations should 
satisfy. Since we are trying to interpret the Univalence Axiom, as it is 
discussed in \cite{Voev}, it is natural that our analysis of the notion of 
smallness be based on the definition of a universal fibration introduced there 
(\cite{Voev}, p.6). In the category of simplicial sets a fibration $f:X\lra Y$ 
is \emph{small} (for some fixed cardinality $\alpha$) if all its fibers are of 
cardinality smaller than $\alpha$.

Observe that (for most cardinalities) Voevodsky's definition of small 
fibrations depends, e.g., on the choice of model of ZFC. This suggests that 
there is no natural category theoretic counterpart exactly capturing this 
definition. So, let us consider some obvious properties of Voevodsky's 
definition: 
\begin{enumerate}
 \item The class of small fibrations is closed under finite products and 
co-products.
\item Since co-fibrations are injective, 
if 
$f:X\lra Y$ is a small fibration, and $g:X'\lra X$ is a fibration and a 
co-fibration then also $f\circ g: X'\lra Y$ is small.
\end{enumerate}

 Thus, by the second point above, if $\Qt$ is a posetal model category all of whose 
morphisms are co-fibrations, then for any small fibration $X\rightf Y$ if 
$\perp\rightwc X_{wc} \rightf X$, then $X_{wc}\rightf Y$ should also be a small 
fibration. Therefore, in any such model category, under any non-trivial 
definition of small fibrations, some small fibrations will be of the form 
$X_{wc}\rightf Y$ where $\perp \rightwc X_{wc}$. Moreover, in order to satisfy 
the first of the above points we have to close the collection of trivial 
co-fibrant objects,  $X_{wc}$, such that there exists some small fibration $X_{wc}\rightf 
Y$ under finite limits and co-limits. The properties of $\Qt$ assure that this 
is still a collection of trivial co-fibrant objects (that the co-base change of 
a weak co-fibration is a weak equivalence - and therefore in $\Qt$ a weak 
co-fibration - follows from Axiom (M4) of model categories; that the product of 
two trivial co-fibrant objects is a trivial co-fibrant objects is proved 
precisely as in the claim of Lemma \ref{psmall}). Let $\mathcal S$ denote the 
collection of trivial co-fibrant objects thus obtained. Consider 
$\Qt^{\mathcal S}$, the ``co-slice category over $\mathcal S$'', i.e., 
$\Qt^{\mathcal S}$ is the full sub-category whose objects are all those $X\in 
\Ob\Qt$ such that $S\rightf X$ for some $S\in \mathcal S$. 
Then 
$\Qt^{\mathcal S}$ is still a model category (one only needs to check that 
$\Qt^{\mathcal S}$ is closed under finite limits and co-limits, which is 
obvious). Moreover, as can be readily checked in Figure 1, since $\Qt$ is 
Cartesian closed, so is $\Qt^{\mathcal S}$. Being posetal, $\Qt^{\mathcal S}$ 
is also locally Caretsian closed.  In addition, if $\Qt$ is (locally) 
(w/f)-Cartesian closed then, by putting $Z=C$ in $(**)$ we see that 
$\Qt^{\mathcal S}$ is also (locally) Cartesian closed. 

It follows that, replacing $\Qt$ with $\Qt^{\mathcal S}$  we obtain a locally 
Cartesian closed posetal model category all of whose trivial co-fibrant objects 
are small, in the sense that whenever $\perp\rightwc X\rightf Y$ the fibration 
$X\rightf Y$ is small. Of course, the model category $\Qt^{\mathcal S}$ may be 
less interesting than the original category $\Qt$. But  the above argument shows 
that - at least for posetal model categories all of whose morphisms  are 
co-fibrations - it is possible to have a notion of smallness which corresponds 
exactly to a fibration $X\rightf Y$ being small when $\perp\rightwc X$. Since, 
as explained above, there cannot be a natural category theoretic definition of 
smallness capturing precisely Voevodsky's notion of small fibration, we believe 
that, given the level of generality we are working in, the above is as good an 
approximation of this notion as could be expected. 

We conclude that: 

\begin{prp}\label{main}
 Let $\Qt$ be a posetal model category all of whose morphisms are 
co-fibrations. Let $\perp \rightwc \tilde U \rightf \top$, and define  a 
fibration $Y\rightf X$ to be small if $\perp \rightwc Y$. Then the fibration 
$p:\tilde U\lra \top$ is universal. If, in addition, $\Qt$ is locally 
(w/f)-Cartesian closed then $\Qt$ meets the Univalence Axiom, with respect to 
the above notion of small fibrations. 
\end{prp}

In the next section we give an example of a non-trivial model category 
satisfying all the assumptions of Proposition \ref{main}. 

\section{The model category $\Qtc$}\label{qtc}

The main result of \cite{GaHa} is the construction of a (non-trivial) posetal 
model category of classes of sets, $\QtN$.  In this section we show that the 
full sub-category, $\Qtc$, of all co-fibrant objects meets all the assumptions 
of Proposition \ref{main}. Indeed, this follows almost immediately from the 
results of \cite{GaHa}, but for the sake of completeness, we give a 
self-contained proof. 

To simplify the exposition, and in order to avoid irrelevant foundational 
issues, we give a slightly simplified version of the model category $\Qtc$. Let 
$\Qtc$ be the category whose objects are the members of $\mathbb P( \mathbb P 
(\mathbb N)):=\{X\subseteq \{M\subseteq \mathbb N\}\}$ and for $X,Y\in \Ob\Qtc$ 
let $X\lra Y$ precisely when for every $x\in X$ there exists $y\in Y$ such that 
$x\subseteq y$. We leave it as an easy exercise for the reader to verify that 
this is indeed a (posetal) category. 

\begin{claim}
 The category $\Qtc$ has limits. Direct limits are given by unions $X\vee Y = 
X\cup Y$, and inverse limits are given by pointwise intersection, namely 
$X\times Y = \{x\cap y: x\in X, y\in Y\}$. The same formulas hold for infinite 
limits. 
\end{claim}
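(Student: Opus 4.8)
The plan is to exploit that $\Qtc$ is posetal: since there is at most one arrow between any two objects, $\Qtc$ is the category associated to a preorder, where I write $X \leq Y$ for ``the arrow $X \lra Y$ exists'', i.e.\ $\forall x \in X\,\exists y \in Y\,(x \subseteq y)$. In a preorder every diagram commutes automatically, a cone over a diagram is nothing but a lower bound of the objects occurring in it, and a cocone is an upper bound; hence a limit (``inverse limit'') is exactly a greatest lower bound and a colimit (``direct limit'') is exactly a least upper bound, all universal properties collapsing to inequalities that hold vacuously because arrows are unique. Thus the entire claim reduces to two order-theoretic assertions: that $\bigcup_i X_i$ is the supremum of the family $\{X_i\}$, and that $\{\bigcap_i x_i : x_i \in X_i\}$ is its infimum (the binary cases being $X \cup Y$ and $\{x\cap y : x\in X,\ y\in Y\}$). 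I would open with this reduction so that the remaining work concerns only the order $\leq$.

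For the join I would verify the two defining inequalities. First, $X_j \leq \bigcup_i X_i$ for each $j$, since for $x \in X_j$ the element $x$ itself lies in $\bigcup_i X_i$ and $x \subseteq x$. Second, if $W$ is any upper bound (so $X_i \leq W$ for all $i$) then $\bigcup_i X_i \leq W$: any $z \in \bigcup_i X_i$ lies in some $X_j$, and $X_j \leq W$ furnishes a $w \in W$ with $z \subseteq w$. This is immediate and treats the finite and infinite cases at once.

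For the meet, write $P := \{\bigcap_i x_i : (x_i)_i \in \prod_i X_i\}$ (so $P = \{x \cap y\}$ in the binary case). That $P$ is a lower bound, $P \leq X_j$, is clear, as each $\bigcap_i x_i$ is contained in its own coordinate $x_j \in X_j$. The content lies in maximality: given a lower bound $W$ (so $W \leq X_i$ for all $i$), I must exhibit, for each $w \in W$, a member of $P$ containing $w$. For every $i$ there is some $x_i \in X_i$ with $w \subseteq x_i$; selecting one such $x_i$ for each $i$ produces a tuple $(x_i)_i \in \prod_i X_i$ with $w \subseteq \bigcap_i x_i \in P$, whence $W \leq P$. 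The binary case is wholly elementary, and the one genuine subtlety---which I would flag as the main obstacle---is the infinite case, where choosing the witnesses $x_i$ simultaneously over all $i$ invokes the Axiom of Choice; I would make this dependence explicit, it being harmless in the ambient ZFC.

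Finally I would dispatch the degenerate diagrams to confirm the formulas cover all small (co)limits, including empty ones. The empty union is $\varnothing$, which is the initial object (it satisfies $\varnothing \leq X$ vacuously), while the empty ``pointwise intersection'' is $\{\N\}$, the empty intersection being taken inside the ambient set $\N$; and $\{\N\}$ is terminal since $X \leq \{\N\}$ for every $X$. As a preorder possessing all suprema and infima is complete, these verifications show that $\Qtc$ has all small limits and colimits given by the stated formulas, these objects being canonical representatives of (co)limits that are otherwise determined only up to the natural isomorphism of the preorder.
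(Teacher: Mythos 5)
Your proof is correct and takes essentially the same route as the paper's: a direct verification of the order-theoretic universal properties in the posetal category (the paper checks the binary inverse-limit case --- that any lower bound $Z$ of $X,Y$ satisfies $Z\lra X\times Y$ --- and declares the rest similar). Your write-up is simply more complete than the paper's terse argument, making explicit the preorder reduction, the simultaneous treatment of infinite families (with the Axiom of Choice flagged for the infinite meet), and the empty diagrams yielding the initial object $\varnothing$ and the terminal object $\{\mathbb{N}\}$, all of which the paper leaves implicit.
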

\begin{proof}
 This is straightforward. Assume, e.g. that we are given $X,Y$ and $Z\lra X$, 
$Z\lra Y$. By definition, this means that for all $z\in Z$ there are $x\in X$, 
$y\in Y$ such that $z\subseteq x$ and $z\subseteq y$. This means that for all 
$z\in Z$ there are $x\in X$ and $y\in Y$ such that $z\subseteq x\cap y$. This 
proves that $X\times Y$ as defined above is the inverse limit of $X$ and $Y$. 
The proof for direct limits is similar. 
\end{proof}

Now we endow $\Qtc$ with a model structure. We do not attempt to justify the 
intuition behind these definitions - this is done in some detail in 
\cite{GaHa}. In order to meet the assumptions of Proposition \ref{main}, we 
must require that all morphisms are labelled (c). So we now proceed to the (w) 
and (f) labels. 

For the definition of weak equivalences it is convenient to denote for $X, Y\in 
\Ob \Qtc$, $X\lra^* Y$ if for all $x\in X$ there exists $y\in Y$ such that 
$|y\setminus x|<\aleph_0$. We now set $X\rightw Y$ if $X\lra Y$ and $Y\lra^* 
X$. This definition obviously satisfies Axiom  (M5) (2 out of 3). Also, if 
$Z\xleftarrow{(wc)} X\lra Y$ then $Y\rightwc Z\vee Y$. Indeed, if $r\in Z\vee 
Y$ then either $r\in Y$ in which case $r\setminus r=\0$ or $r\in Z$, in which 
case there is $x\in X$ such that $|r\setminus x|<\aleph_0$ but $X\lra Y$, so 
there is $y\in Y$ such that $x\subseteq y$ and $|r\setminus y|\le |r\setminus 
x|<\aleph_0$. This shows that the (wc)-part of Axiom (M4) is met by this 
notation. 

It remains to define the (f)-labelling: an arrow $X\lra Y$ is labelled (f) if 
and only if for every $x\in X\cup\{\emptyset\}$, $y\in Y$ and a finite
subset $\{b_1,\dots ,b_n\}\subseteq  y$ there exists $x'\in X$ such that 
$(x\cap y)\cup\{b_1,...,b_n\}\subseteq x'$. 

First, we observe: 

\begin{claim}
 If $X\xrightarrow{(wcf)} Y$ then $Y\lra X$. 
\end{claim}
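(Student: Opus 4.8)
The plan is to prove the statement directly, by unwinding the two non-trivial labels on $X\xrightarrow{(wcf)} Y$ and showing, element by element, that every $y\in Y$ is contained in some member of $X$ (which is exactly what $Y\lra X$ means here). Note first that the $(c)$ label carries no information, since all arrows in $\Qtc$ are co-fibrations; the content therefore lies entirely in the $(w)$ and $(f)$ labels.

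First I would extract from the weak-equivalence hypothesis the relevant finite approximation. Since $X\rightw Y$ we have in particular $Y\lra^* X$, which furnishes, for each fixed $y\in Y$, some $x_0\in X$ whose defect $F:=y\setminus x_0$ is finite, i.e. $|y\setminus x_0|<\aleph_0$. The point to get right here is the orientation: it is the defect $y\setminus x_0$ (the part of $y$ \emph{missing} from $x_0$) that the weak equivalence controls, and this is precisely the finite datum that the fibration condition is able to absorb.

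Next I would feed this finite defect into the $(f)$ label. Applying the fibration condition to $x_0\in X\cup\{\emptyset\}$, to the chosen $y\in Y$, and to the finite subset $F=y\setminus x_0\subseteq y$, we obtain $x'\in X$ with $(x_0\cap y)\cup F\subseteq x'$. The combinatorial heart of the argument is then the trivial identity $(x_0\cap y)\cup(y\setminus x_0)=y$: every element of $y$ either lies in $x_0$, hence in $x_0\cap y$, or lies outside $x_0$, hence in $y\setminus x_0$. Consequently $y\subseteq x'$ with $x'\in X$, and since $y\in Y$ was arbitrary this yields $Y\lra X$.

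I do not anticipate a deep obstacle: once the labels are unwound the argument is a three-line element chase. The step I would flag as the crux is the matching between the finite defect produced by $(w)$ and the finite set consumed by $(f)$; with the orientation above these coincide exactly (via $(x_0\cap y)\cup(y\setminus x_0)=y$), which is what allows a \emph{single} application of the fibration condition to capture all of $y$, rather than forcing a limiting argument over ever-larger finite subsets of $y$. Were the weak equivalence to control $x_0\setminus y$ instead of $y\setminus x_0$, no finite amount of fibration-patching would suffice, so this orientation is where the hypotheses genuinely interlock.
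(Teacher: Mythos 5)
Your proof is correct and is essentially identical to the paper's own: the same choice of $x_0$ from the $(w)$-label, the same single application of the $(f)$-condition with the finite set $F=y\setminus x_0\subseteq y$, and the same (implicit in the paper) identity $(x_0\cap y)\cup(y\setminus x_0)=y$. One remark: your reading of $Y\lra^* X$ (finiteness of $y\setminus x_0$) actually reverses the set difference in the paper's printed definition of $\lra^*$, which would literally give $x_0\setminus y$ finite; but your orientation is the one the paper itself uses both in this proof and in its verification of Axiom (M4), so you have matched the intended definition, and your closing observation correctly identifies why the printed orientation could not work.
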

\begin{proof}
 Let $y\in Y$. We have to show that there exists $x\in X$ such that $y\subseteq 
x$. Let $x_0\in X$ be such that $z:=y\setminus x$ is finite, as provided by the 
(w)-label. So the (f)-label, applied for $x_0,y$ and $z\subseteq y$ assures the 
existence of $x$ with the desired property. 
\end{proof}
This claim gives us, automatically, one part of (M1) - any arrow right-lifts 
with respect to an isomorphism - one part of (M2) - any arrow $X\rightc Y$ 
decomposes as $X\rightc Y\rightwf Y$ and (M3) (it remains only to verify that 
fibrations are stable under base-change). Axiom (M4) is also automatic. So we 
are left with the $(wc)\rtt (f)$ part of (M1), the (wc)-(f) decomposition of 
(M2) and the stability of fibrations under base change. All computations are 
trivial, so we will be brief. 

Let $X\rightwc Y$ and $W\rightf Z$ be such that $X\lra W$ and $Y\lra Z$. We 
have to show that $Y\lra W$. So let $y\in Y$. Let $x\in X$ be such that 
$b:=y\setminus x$ is finite. Let $w\in W$ be such that $x\subseteq w$. Let 
$z\in Z$ be such that $y\subseteq z$. Apply the definition of (f)-arrows with 
respect to $w,z$ and $b$. Then there exists $w'\in W$ such that $(w\cap z)\cup 
b\subseteq w'$. So $y\subseteq w'$, as required. An essentially similar 
argument shows that fibrations are stable under base-change. 

To prove (M2), let $X\lra Y$ be any arrow. Let 
\[
X_{wc}:=\{x\cup y_0: x\in X, (\exists y\in Y)(y_0\subseteq y), y_0\text{ 
finite}\}.
\]
Then $X\rightwc X_{wc}\rightf Y$, as can be readily checked. 

We conclude that $\Qtc$ is a posetal model category all of whose arrows are 
co-fibrations. It is not trivial (in the sense that not all arrows are 
fibrations) because $\0\rightwc X$ is not a fibration unless $X=\0$. Since it 
is posetal, to show that it is locally Cartesian closed, it suffices to show 
that it is Cartesian closed.

Define, for $C,B\in \Ob\Qtc$: 
\[
C^B:=\bigcup \{ Z: Z\times B \lra C ,\, A\lra Z\}.
\]
This is, obviously, an object in $\Qtc$, so we need only check that 
$C^B \times B \lra C$ and that for every
object $Z$,  $Z\lra A$ and $Z\times B \lra C$ implies  $Z\lra C^B$.
The latter is immediate by definition of $C^B$. The former 
requires a little argument. We need to check that for every $d\in C^B$ 
and $b\in B$ there is a morphism  $\{d\cap b\}\lra C$. By definition of $C^B$,
there exists $Z$ such that $Z\times B \lra C$ and $d\in Z$,  i.e. $\{d\}\lra Z$.
By definition of the product $Z\times B$, this implies $\{d \cap b\} \lra C$,
as required.

\begin{rem}
 Note that the above shows that $\Qtc$ is, in particular, a logical model 
category in the sense of \cite[Definition 23]{ArKa}. Consequently (cf. Theorem 
26) $\Qtc$ admits a sound interpretation of the syntax of type theory (though 
the lack of non-trivial sections probably makes this interpretation trivial). 
\end{rem}

All of the above shows that $\Qtc$ is a posetal locally Cartesian closed model 
category which is non-trivial (in the sense that not all morphisms are labelled 
(fc)). So in order to apply Proposition \ref{main} it remains to show that it 
is locally (w/f)-Cartesian closed. We prove: 

\begin{claim}\label{claim5}
$ Z\times B\rightwc Z\times C$  iff for all $\{z\}\lra Z$, $\{z\}\times 
B\rightwc \{z\}\times C$
\end{claim}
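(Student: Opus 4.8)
The plan is to prove both directions by unwinding the definitions of the product, the (w)-label, and the (wc)-label in $\Qtc$. Recall that in $\Qtc$ we have $Z\times B=\{z\cap b: z\in Z,\, b\in B\}$, that $X\rightwc Y$ means $X\lra Y$ (so every element of $X$ is contained in some element of $Y$) together with $X\lra Y$ being a weak equivalence, i.e. additionally $Y\lra^* X$ (every $y\in Y$ has some $x\in X$ with $|y\setminus x|<\aleph_0$), and that every arrow is automatically a co-fibration. So the content of $Z\times B\rightwc Z\times C$ is really just the pair of conditions $Z\times B\lra Z\times C$ and $Z\times C\lra^* Z\times B$. The key observation making the claim plausible is that each element of $Z\times B$ has the form $z\cap b$ for a \emph{single} $z\in Z$, and similarly for $Z\times C$, so the comparison between $Z\times B$ and $Z\times C$ can be carried out ``fibre by fibre'' over the elements $z\in Z$.

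First I would prove the right-to-left direction, which should be the more routine one. Assume that for every singleton $\{z\}\lra Z$ we have $\{z\}\times B\rightwc\{z\}\times C$. For the arrow $Z\times B\lra Z\times C$: take an element $z\cap b\in Z\times B$; applying the hypothesis at the relevant $z$ gives $\{z\}\times B\lra\{z\}\times C$, hence some $z\cap c\in\{z\}\times C\subseteq Z\times C$ with $z\cap b\subseteq z\cap c$, as required. For the weak-equivalence condition $Z\times C\lra^* Z\times B$: take $z\cap c\in Z\times C$; the hypothesis $\{z\}\times C\lra^*\{z\}\times B$ furnishes some $z\cap b$ with $|(z\cap c)\setminus(z\cap b)|<\aleph_0$, and since $\{z\}\times B\subseteq Z\times B$ this element lies in $Z\times B$. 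Combining, $Z\times B\rightwc Z\times C$.

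Next I would prove the left-to-right direction. Assume $Z\times B\rightwc Z\times C$ and fix a singleton $\{z\}\lra Z$; the goal is $\{z\}\times B\rightwc\{z\}\times C$. The arrow part is immediate from $Z\times B\lra Z\times C$ once one checks it restricts correctly, and the crux is the weak-equivalence condition $\{z\}\times C\lra^*\{z\}\times B$. Here I would use that the witness for $Z\times C\lra^* Z\times B$ applied to an element $z\cap c$ produces some $z'\cap b\in Z\times B$ with $|(z\cap c)\setminus(z'\cap b)|<\aleph_0$, and one must argue that the witness can be taken over the \emph{same} $z$, i.e. with $z'=z$. This is exactly where the fibrewise structure must be exploited: intersecting the witness back with $z$ (replacing $z'\cap b$ by $z\cap b$) only removes elements not in $z$, and since $z\cap c\subseteq z$, the finite-difference estimate $|(z\cap c)\setminus(z\cap b)|\le|(z\cap c)\setminus(z'\cap b)|<\aleph_0$ is preserved.

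I expect the main obstacle to be precisely this last step — controlling the \emph{fibre} over $z$ in the left-to-right direction, ensuring that an element of $Z\times C$ of the form $z\cap c$ is matched by an element of $Z\times B$ of the form $z\cap b$ with the \emph{same} $z$, rather than merely by some $z'\cap b$ with a possibly different $z'$. The resolution is the observation that $z\cap c\subseteq z$, so intersecting any matching element with $z$ does no harm to the containment or to the finiteness of the symmetric difference; one should double-check that $z\cap b$ genuinely lies in $\{z\}\times B$ (it does, being $z\cap b$ for $b\in B$ with $\{z\}\lra Z$ giving $z\in Z$ so $b$ ranges over $B$). Once this fibrewise reduction is in hand the remaining verifications are the routine unwindings sketched above.
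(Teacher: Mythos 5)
Your proof is, in substance, the paper's own proof: the right-to-left direction is the routine unwinding that the paper dismisses as ``immediate from the definition of (wc)-arrows'', and in the left-to-right direction your key step --- replacing the witness $z'\cap b$ by $z\cap b$ and noting that $z\cap c\subseteq z$ forces $(z\cap c)\setminus(z\cap b)\subseteq(z\cap c)\setminus(z'\cap b)$ --- is exactly the paper's intersection trick, which it phrases as $\{z''\cap c''\}\lra^*\{z''\cap z\cap b\}\lra\{z''\cap b\}$. (A remark on conventions: you read $Y\lra^* X$ as ``every $y\in Y$ has $x\in X$ with $|y\setminus x|<\aleph_0$''; the paper's displayed definition of $\lra^*$ literally has the difference the other way around, but your reading is the one the paper actually uses in all of its verifications, including its proof of this very claim, so you adopted the intended convention.)

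There is one genuine blemish, which the paper's proof shares but which you, unlike the paper, assert explicitly and falsely: $\{z\}\lra Z$ does \emph{not} give $z\in Z$. In $\Qtc$ it only gives $z\subseteq z_0$ for some $z_0\in Z$, so for such $z$ the set $z\cap c$ need not be an element of $Z\times C$, and your step ``apply the witness for $Z\times C\lra^* Z\times B$ to $z\cap c$'' is not available; as written, your left-to-right argument (like the paper's, which silently quantifies only over $z\in Z$) proves the claim only for $z\in Z$. This is not a harmless weakening: in the paper's subsequent use of the claim (representability of $Hom^{(w)}_A(-\times B,C)$), it is invoked for singletons $\{z\}\lra Z'$ where $z$ is an element of a \emph{different} object, so the general form is what is needed. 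The repair is one more application of the same trick: pick $z_0\in Z$ with $z\subseteq z_0$, apply the hypothesis to $z_0\cap c\in Z\times C$ to get $z'\cap b$ with $(z_0\cap c)\setminus(z'\cap b)$ finite, and observe that any $u\in(z\cap c)\setminus(z\cap b)$ satisfies $u\in z\subseteq z_0$ and $u\notin b$, whence $(z\cap c)\setminus(z\cap b)\subseteq(z_0\cap c)\setminus(z'\cap b)$; the arrow part restricts to such $z$ the same way. With that line added, your argument covers the full statement.
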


\begin{proof}
The right to left direction is immediate from the definition of (wc)-arrows, so 
we prove the other direction. The arrow $Z\times B\rightwc Z\times C$ means 
that: 
\begin{itemize}
 \item for any $z \in Z$, $b \in B$ exists $z' \in Z$ and $c'\in C$
such that $\{z \cap b\}\lra \{z'\cap c'\}$; and 
\item for any $z''\in Z$, $c''\in C$ exists $z\in Z$, $b\in B$ such that
$ \{z''\cap c''\}\lra^* \{z \cap b\}$. 
\end{itemize}
Observe that the first bullet (for fixed $z\in Z, b\in B$) gives $z \cap 
b\subseteq z' \cap c'$, implying that $z \cap b\subseteq z \cap z'\cap 
c'\subseteq z \cap c'$, therefore $\{z\}\times B \lra\{z\}\times C$.

Analogously, for fixed $z''\in Z, c''\in C$  the assumption $ \{z'' \cap 
c''\}\lra^* \{z \cap b\}$
implies $\{z'' \cap c''\}\lra^* \{z'' \cap z \cap b\}\lra \{z''\cap b\}$. 
Combining these two observations we get $\{z\}\times B \rightwc \{z\} \times 
C$. 
\end{proof}

Now, given $A\in \Ob\Qtc$ and $B\lra A$, $C\lra A$, we define 
\[
(C^B_w)/A = \bigcup \{Z : Z\times B \rightw Z\times C \}\times A 
\] 
and show that this is an object representing $Hom^{(w)}_A(-\times B,C)$ ($\Qtc$ 
is trivially right proper, so this is indeed a functor). More precisely: 
\begin{claim}
 For all $Z\lra A$, we have 
                     $Z \lra (C^B_w)/A$ if and only if  $Z\times B \rightw  
Z\times C.$
\end{claim}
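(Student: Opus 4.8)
The plan is to reduce the representability statement to the displayed equivalence and then to a single combinatorial monotonicity fact about the set $W:=\bigcup\{Z:Z\times B\rightw Z\times C\}$, so that $(C^B_w)/A=W\times A$. Since $\Qtc$ is posetal, representing $Hom^{(w)}_A(-\times B,C)$ amounts exactly to the stated iff: the hom-set $Hom^{(w)}_A(Z\times B,C)$ is non-empty precisely when the (unique) morphism $Z\times B\lra Z\times C$ exists and is a weak equivalence (compose it with the projection to recover the witness $h:Z\times B\lra C$), i.e. precisely when $Z\times B\rightw Z\times C$, while $Hom(Z,(C^B_w)/A)$ is non-empty precisely when $Z\lra (C^B_w)/A$. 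Because every arrow of $\Qtc$ is a co-fibration, a weak equivalence is automatically a weak co-fibration, so $\rightw$ and $\rightwc$ coincide and Claim \ref{claim5} applies verbatim with $\rightw$ in place of $\rightwc$. Finally, since $Z\lra A$ is assumed, the universal property of the product $W\times A$ gives $Z\lra W\times A$ iff $Z\lra W$. So it suffices to prove that $Z\lra W$ iff $Z\times B\rightw Z\times C$.

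First I would pin down $W$ elementwise. Every $w\in W$ lies in some $Z'$ with $Z'\times B\rightw Z'\times C$; applying Claim \ref{claim5} to $Z'$ (for the singleton $\{w\}\lra Z'$) yields $\{w\}\times B\rightw \{w\}\times C$. Conversely, any $w$ with $\{w\}\times B\rightw\{w\}\times C$ is itself an admissible $Z'=\{w\}$ in the union. Hence
\[
W=\{\,w:\{w\}\times B\rightw\{w\}\times C\,\}.
\]

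The heart of the argument --- and the only genuinely non-formal step --- is that $W$ is downward closed: if $z\subseteq w$ and $w\in W$, then $z\in W$. This I would verify directly from the set-theoretic descriptions of the arrows. For $\{z\}\times B\lra\{z\}\times C$: given $b\in B$, pick $c\in C$ with $w\cap b\subseteq w\cap c$ (using $\{w\}\times B\lra\{w\}\times C$); intersecting with $z$ and using $z\subseteq w$ gives $z\cap b\subseteq z\cap c$. For $\{z\}\times C\lra^*\{z\}\times B$: given $c\in C$, pick $b\in B$ with $|(w\cap b)\setminus(w\cap c)|<\aleph_0$; the identity $(z\cap b)\setminus(z\cap c)=z\cap(b\setminus c)$ together with $z\subseteq w$ yields
\[
(z\cap b)\setminus(z\cap c)=z\cap(b\setminus c)\subseteq w\cap(b\setminus c)=(w\cap b)\setminus(w\cap c),
\]
so this difference is finite as well. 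Thus $\{z\}\times B\rightw\{z\}\times C$, i.e. $z\in W$.

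With these ingredients both directions are immediate. If $Z\times B\rightw Z\times C$ then $Z$ is one of the objects in the union, so $Z\subseteq W$ and hence $Z\lra W$. Conversely, suppose $Z\lra W$. For any singleton with $\{z'\}\lra Z$ we get $\{z'\}\lra Z\lra W$, so $z'\subseteq w$ for some $w\in W$; downward closure gives $z'\in W$, i.e. $\{z'\}\times B\rightw\{z'\}\times C$. Claim \ref{claim5} then delivers $Z\times B\rightw Z\times C$. Combined with the reduction of the first paragraph, this proves the claim. The main obstacle is the downward-closure computation; everything else is bookkeeping with the universal property of products and Claim \ref{claim5}.
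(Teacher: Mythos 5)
Your proof is correct and takes essentially the same route as the paper's: right-to-left is immediate from the definition of the union, and left-to-right reduces to singletons via Claim \ref{claim5}, exactly as the paper does. The only real difference is that your ``heart of the argument'' --- the downward closure of $W$ --- is redundant: since Claim \ref{claim5} quantifies over all singletons $\{z\}\lra Z'$ (i.e.\ over subsets of elements of $Z'$, not just elements), the fact that $z\subseteq w$ with $w$ in some $Z'$ satisfying $Z'\times B\rightw Z'\times C$ already yields $\{z\}\times B\rightw\{z\}\times C$ by a direct application of Claim \ref{claim5} to $Z'$, so the explicit finite-difference computation can be dropped.
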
 
\begin{proof}
The Right to left direction is immediate from the definition. So suppose $Z\lra 
(C^B_w)/A$. We need to show that $Z\times B \rightw  Z\times C$. 
By Claim \ref{claim5}, this happens if for all $\{z\}\lra  Z$, 
$\{z\}\times B \rightw \{z\}\times C$.
But our assumption that $Z\lra (C^B_w)/A$ implies that $Z \lra Z'$ for some 
$Z'$ such that $Z'\times B 
\rightw Z'\times C$. So $\{z\}\lra Z'$, by Claim \ref{claim5} we are done.
\end{proof}

Combining everything together we get: 

\begin{theorem}
 There exists a non-trivial posetal model category satisfying the Univalence 
Axiom. 
\end{theorem}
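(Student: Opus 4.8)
The plan is to take $\Qtc$ as the witnessing category and to obtain the statement as an immediate corollary of Proposition \ref{main}: by this point in Section \ref{qtc} every hypothesis of that proposition has been checked for $\Qtc$, so the only remaining task is to collect those verifications and confirm that they line up, one for one, with the assumptions of the proposition (together with the separate non-triviality clause of the theorem statement).

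First I would assemble the structural facts already established. The category $\Qtc$ is posetal by construction, since an arrow $X\lra Y$ is determined by the containment condition on elements and hence is unique when it exists; it carries a model structure in which \emph{every} morphism is labelled (c), this being exactly the constraint imposed when the (w)- and (f)-labels were defined so as to meet Proposition \ref{main}. Axioms (M1)--(M5) were verified explicitly, with the (wc)--(f) factorisation of (M2) supplied by the object $X_{wc}$. Non-triviality is witnessed by the remark that $\0\rightwc X$ is a fibration only when $X=\0$, so not all arrows are labelled (fc). Finally, $\Qtc$ is Cartesian closed via the explicit exponential $C^B$, and being posetal it is therefore locally Cartesian closed by the observation in Section \ref{CCC}.

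The substantive remaining hypothesis is that $\Qtc$ be locally (w/f)-Cartesian closed, and this is where I expect the real content to lie — the final assembly of the theorem is otherwise purely formal. That hypothesis is exactly the last Claim, in which the object $(C^B_w)/A$ is constructed and shown to represent the functor $Hom^{(w)}_A(-\times B,C)$. The crux of that representability argument is Claim \ref{claim5}, the fibrewise characterisation of trivial co-fibrations between products $Z\times B\rightwc Z\times C$, which reduces the functoriality and representability conditions to a pointwise computation over the $\{z\}\lra Z$. Granting this, all hypotheses of Proposition \ref{main} are in place.

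With this in hand I would simply invoke Proposition \ref{main}, taking $\perp=\0$ as the initial object and $\top$ as the terminal object, and letting $\tilde U$ be the object obtained from the factorisation $\0\rightwc \tilde U\rightf \top$ guaranteed by (M2). The proposition then yields both that the fibration $p:\tilde U\rightf \top$ is universal for the class of small fibrations (those $Y\rightf X$ with $\0\rightwc Y$) and that $\Qtc$ satisfies the Univalence Axiom for this notion of smallness, univalence of the fibration being automatic from the posetal degeneracy proved earlier. Thus $\Qtc$ is a non-trivial posetal model category satisfying the Univalence Axiom, completing the proof.
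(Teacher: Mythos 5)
Your proposal is correct and follows the paper's own route exactly: the paper proves this theorem precisely by ``combining everything together,'' i.e.\ by observing that the preceding verifications (posetal, all morphisms labelled (c), model axioms (M1)--(M5), non-triviality via $\0\rightwc X$ not being a fibration unless $X=\0$, Cartesian closedness, and local (w/f)-Cartesian closedness via Claim \ref{claim5} and the representability of $Hom^{(w)}_A(-\times B,C)$ by $(C^B_w)/A$) are exactly the hypotheses of Proposition \ref{main}, applied to $\Qtc$ with $\tilde U$ given by the (M2)-factorisation $\0\rightwc \tilde U\rightf \top$. You also correctly identify the local (w/f)-Cartesian closedness as the only substantive hypothesis, so there is nothing to add.
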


Before continuing we remark that $\Qtc$, as presented in this note is a full 
sub-category of the category of co-fibrant objects in the model category $\QtN$ 
defined in \cite{GaHa}. The exact same proof of the above theorem would work 
for the full category of co-fibrant objects in $\QtN$. Of course this category 
captures the full homotopy structure of $\QtN$, and may - therefore - be a more 
interesting example. We remark also that there does not seem to be anything 
spcial about $\mathbb N$ or about $\aleph_0$ in the above construction (or in 
the more genreal construction of $\QtN$).  Apparently, the exact same 
construction could be achieved for any regular cardinal $\lambda$ (in place of 
$\aleph_0$) replacing, throughout ``finite'' by ``less than $\lambda$''. This 
gives --- in addition to the formal discussion of smallness in the previous 
sub-section ---  an analogy with Voevodsky's notion of small fibrations: it is 
not unreasonable (see the following paragraph) to think of the morphisms in the 
resulting model category as a class of injections (satisfying certain 
compatibility conditions), our definition of smallness implies that a fibration 
is small precisely when every memebr of the class of these injections has a 
domain smaller than $\lambda$. 

To conclude, let us consider the category $\FC$, whose objects are $\Ob \Qtc$ 
and such that $\Mor(X,Y)$  consists of the arrows $X\xrightarrow{\sigma} Y$ for 
$X,Y\in \Ob\FC$ such that $\sigma:\bigcup X\lra \bigcup Y$ and $\sigma(X)\lra 
Y$ is an arrow in $\Mor\Qtc$ (where $\sigma(X):=\{\,\{\sigma(a):a\in x\}\,:\,x\in X\}$). The 
category $\FC$ is, on the one hand, obvioulsy richer than $\Qtc$ (it is not 
posetal). But, on the other hand, it is readily seen that any slice of $\FC$ is 
(naturally) equivalent to the corresponding slice of $\Qtc$. This local model 
structure induces naturally a (c)-(f)-(w) labeling on $\Mor(\FC)$ (see 
\cite{GaHa} for the details) satsifying Quillen's axioms (M1)-(M5). But the 
category $\FC$ does not have products and co-products. So we ask: \\

\noindent{\bf Question}: Is there a model category $\FC'$ such that the labeled 
category $\FC$ described above embeds in $\FC'$? Does $\FC'$ satisfy the 
Univalence Axiom? \\

\noindent\emph{Acknowledgement} We would like to thank N. Durov for his help in 
the definition of (w/f)-Cartesian closed model categories. 

\bibliographystyle{alpha}
\bibliography{../../Bibfiles/harvard}
\end{document}